\def\C{\mathbb C}
\def\E{\mathbb E}
\def\R{\mathbb R}
\def\P{\mathbb P}
\def\Z{\mathbb Z}
\newtheorem{dfn}{Definition}
\newtheorem{lem}{Lemma}
\newtheorem{thm}{Theorem}
\newtheorem{rmk}{Remark}
\newtheorem{exm}{Example}
\title[Fast rates in learning with dependent observations]
      {Fast rates in learning with dependent observations}
\begin{document}

\maketitle

\begin{abstract}
In this paper we tackle the problem of fast rates in time series forecasting
from a statistical learning perspective.
In a serie of papers (e.g. \cite{meir,modha,alqwin}) it is shown that the main
tools used in learning theory with iid observations can be extended to the
prediction of time series.
The main message of these papers is that, given a family of predictors, 
we are able to build a new predictor that predicts the series as well as the
best predictor in the family, up to a remainder of order $1/\sqrt{n}$.
It is known that this rate cannot be improved in general. In this paper, we
show that in the particular case of the least square loss, and under a strong assumption
on the time series ($\phi$-mixing) the remainder is actually of order $1/n$.
Thus, the optimal rate for iid variables, see e.g. \cite{TsybakovAgg}, and 
individual sequences, see \cite{lugosi} is, for the first time, achieved for
uniformly mixing processes.
We also show that our method is optimal for aggregating sparse linear combinations
of predictors.
\end{abstract}

\begin{keywords}
Statistical learning theory, time series prediction, PAC-Bayesian bounds,
oracle inequalities, fast rates, sparsity, mixing.
\end{keywords}

\section{Intro}

The problem of time series forecasting is a standard problem in statistics.
The parametric approach contains a wide range of models associated with efficient
estimation
and prediction methods, see e.g. \cite{hamilton,davis}.

In the last few years, several universal approaches emerged from various fields
such that non-parametric statistics, machine learning, computer science and game
theory. These approaches share some common features: the aim is to
to build a prediction procedure that is able to predict the
series as well
as the best predictor in a given set of initial predictors, say $\Theta$.
The set of predictors are usually inspired by different parametric or non-parametric
statistical models. The true
distribution of the data is
not assumed to belong to one of these models. However, we can distinguish two
classes in these approaches, with different quantification of the objective,
and different terminologies:
\begin{itemize}
\item in the ``prediction of individual sequences'' approach, predictors are
usually
called ``experts''. The objective is online prediction: at each date $t$, a
prediction of the future realization $x_{t+1}$ is based on the previous observations
$x_1$, ..., $x_t$,
the objective being to minimize the cumulative prevision loss. See for example
\cite{lugosi,stoltz} for an introduction.
\item in the statistical learning approach, the given predictors are sometimes
referred as ``models'' or ``concepts''. The batch setting is more classical in statistics.
A prediction procedure is build on a complete sample $X_1$, ..., $X_n$. The performance
of the procedure is compared on average with the best predictor, called the ``oracle''.
The environment is not deterministic and some hypotheses like mixing or weak dependence
is required: see \cite{meir,modha,alqwin}.
\end{itemize}

In both settings, we are able to predict a bounded time series as well as
the best expert, up to a small remainder. This type of results is referred in
statistical
 theory as an oracle inequality.
In general, neglecting the size of the set of predictors $\Theta$, the remainder
is of the
order $1/\sqrt{n}$ in both approaches: see, e.g., \cite{lugosi} for the
``individual sequences''
approach; for the ``statistical learning approach''
the rate $1/\sqrt{n}$ is reached in \cite{alqwin}. This paper is based on
the following remark:
in the case of prediction of
individual sequences, under stronger assumption on the loss function (satisfied
e.g. by the
quadratic loss), a fast rate $1/n$ can be reached. Note that \cite{meir,modha}
deal with the quadratic loss, their rate can be better than $1/\sqrt{n}$ but
cannot reach $1/n$. Here, we prove that the same result is true in the statistical
learning setting. Namely, under a $\phi$-mixing assumption
introduced in
\cite{ib62}, we are able to reach the fast rate in the batch setting for the
quadratic loss.

Following \cite{alqwin},  we will use tools from the
PAC-Bayesian theory
to build our prediction procedure. Historically, the PAC-Bayesian point of view
emerged in
statistical learning to deal with supervised classification (using the
$0/1$-loss), see the
seminal papers \cite{STW97,McA2}. These results were extended to general loss
functions
and more accurate bounds were then given, see for example
\cite{Catoni2004,Catoni2007,
AlquierPAC,DT1,AudibertHDR,AL,Seldin,gerchi}. Interestingly enough, PAC-Bayesian
methods often lead to a prediction
procedure that is an aggregation of the various predictors in $\Theta$ with
exponential
weights, a standard procedure in individual sequences prediction (introduced by
\cite{VOVK,LiWa}).  It is striking to note that this procedures receives
theoretical justification from approaches that have so different philosophies
and objectives.
This procedures received various names: EWA, for Exponentially Weighted
Aggregate, in \cite{DT1,gerchi},
Gibbs estimator in \cite{Catoni2004,Catoni2007,AlquierPAC,AudibertHDR}, weighted
majority
algorithm in \cite{LiWa}... In \cite{AudibertPhD}, it is also proved that this
estimator is simply the Bayesian estimator under
suitable model and prior.

In Section~\ref{section_context} we introduce the notations used in the
whole paper, in particular the time series $(X_t)_{t\in\mathbb{Z}}$ and the set
of predictors $\Theta$.
Section~\ref{section_description} is devoted to the description of the Gibbs
estimator.
Our main result is Theorem \ref{mainthm},
it is stated in
Section~\ref{section_theorem}. In Section~\ref{sectionexamples} we provide examples
of time series
satisfying the main assumption of Theorem \ref{mainthm}
($\phi$-mixing). In Section~\ref{section_algo} we
discuss the implementation
of our procedure using MCMC methods and show the results of some simulations.
Finally,
proofs are given Section~\ref{sectionproofs}, with some technical results postponed
to the appendix.
As we will see, the main tool needed to apply PAC-Bayesian techniques is a
control of the Laplace transform
of the prevision risk. In the iid setting, this might be done using classical
Hoeffding's or
Bernstein's Inequalities. In the context of $\phi$-mixing, such a result is
provided
by a powerful result in \cite{samson}.

Note that in this paper, we focus on the case where the set of predictors is
the linear span of a finite family of basic predictors. Theorem \ref{mainthm}
will be of particular interest in the case where a sparse combination of those
basic predictors provide a good prediction. But the results in these paper can
be extended in other contexts (e.g. if we only want to predict as well as the
best basic predictor). The proof of Theorem \ref{mainthm} involves a general
result, Lemma \ref{PACBAYES}, that can be adapted to these various context.

\section{The context}
\label{section_context}

\subsection{The observation}

We assume that we observe $(X_1,\dots,X_n)$ where $(X_{t})_{t\in\mathbb{Z}}$
is a real, stationary process, bounded by a constant $B$. We remind the
$\phi$-mixing
coefficients of the process $(X_t)$ as introduced by
\cite{ib62}:
\begin{dfn}[$\phi$-mixing coefficients] We define the $\phi$-mixing coefficients
of the
process $(X_{t})_{t\in\mathbb{Z}}$ by
$$
\phi_r=\sup_{(A,B)\in\,\mathfrak{S}_0\times\mathfrak{F}_r}|\pi(B/A)-\pi(B)|
$$
where $\mathfrak{S}_0=\sigma(X_t, t\le 0)$ and $\mathfrak{F}_r=\sigma(X_t,t\ge
r)$.
We also define:
$$ K_{\phi}^{(n)} (q):=1+\sum_{r=1}^{n-q}\sqrt{\phi_{\lfloor r/q \rfloor}}. $$
\end{dfn}

\subsection{Set of predictors}

We set a value $q$ and a family of functions: $g_{1}$, ..., $g_{p}:[-B,B]^{q}
\rightarrow [-B,B] $. The set of predictors, for a given $b>0$, is defined by:
$$ \left\{f_{\theta},\theta\in\Theta(b) \right\} $$
where $\Theta(b)=\{\theta\in\R^{p}:\|\theta\|_{1}< b\}$, and
$$ f_{\theta} = \sum_{j=1}^{p}\theta_{j} g_{j} .$$
We also put $\Theta=\R^{p}$ and our objective is to find a $\theta$ such
that $X_{q+1}$ is well predicted by $f_{\theta}(X_{q},...,X_{1})$ on average under the stationary distribution.

Note that we will allow very large set of predictors (experts, ...). Actually,
we will allow $n \ll p$. In this case, a sparsity assumption will be necessary:
namely, it is possible to build a good predictor $\theta$ such that most of
its coordinates are close to $0$. This is now a classical assumption in statistical
learning theory, see e.g. \cite{Tibshirani-LASSO,vdgb}.

\begin{exm}[Auto-regressive predictors]
\label{examar}
A very classical example is to design predictors based on auto-regressive models
(AR).
We put $p=q$ and $g_{1}(x_{q},...,x_{1})=x_{q}$, ...,
$g_{q}(x_{q},...,x_{1})=x_{1}$
so we obtain AR predictors
$$ f_{\theta}(X_{q},...,X_{1}) = \sum_{j=1}^{q}\theta_{j} X_{p-j} .$$
Note that in this case, $p<n$.
\end{exm}

\begin{exm}
We can extend the previous setting to non-linear AR predictors. For example,
We take $p=2^q$ and $ g_{1}(x_{q},...,x_{1}) = 1(x_{q}>0,...,x_{1}>0) $, then
$g_{2}(x_{q},...,x_{1}) = 1(x_{q}>0,...,x_{2}>0,x_{1}\leq 0) $, ..., up to
$ g_{2^{q}}(x_{q},...,x_{1}) = 1(x_{q}\leq 0,...,x_{1}\leq 0) $.
\end{exm}

\begin{dfn}[Prevision and empirical risks]
We define the prevision risk
$$ R(\theta) = \E_{\P}
\left\{\left[X_{q+1}-f_{\theta}(X_{q},...,X_{1})\right]^{2}\right\} $$
and the empirical risk
$$ r(\theta) = \frac{1}{n-q} \sum_{i=q+1}^{n}
\left[X_{i}-f_{\theta}(X_{i-1},...,X_{i-q})\right]^{2} $$
and
$$ \overline{\theta}\in\arg\min_{\Theta}R .$$
\end{dfn}

The objective is to build an estimator $\hat{\theta}$ based on the observations
$(X_1,\ldots,X_n)$ such that $R(\hat{\theta})$
is as small as possible. We see in the next sections that the Gibbs estimator
reaches this objective.

\section{Description of the method}
\label{section_description}

Ths Gibbs estimator as defined in \cite{Catoni2007} requires a prior
distribution
on the parameter space.

\begin{dfn}[The prior]
For $I\subset\{1,...,p\}$, $b>0$,
$$ \Theta_{I}(b) = \biggl\{\theta\in\Theta(b):
         \quad \forall i\notin I, \theta_{i} = 0\biggr\} $$
and
$$ \Theta_{I} = \biggl\{\theta\in\Theta:
         \quad \forall i\notin I, \theta_{i} = 0\biggr\} .$$
Finally, let us put $\pi_{b}^{I}$ the uniform probability measure on
$\Theta_{I}(b+1) $.
We put, for some $b>0$,
$$ \pi_{b} \propto \sum_{k=0}^{n} 2^{-k-1}
\sum_{
\tiny{
\begin{array}{c}
I\subset\{1,...,p\}
\\
|I|=k
\end{array}
}}
{p\choose k}^{-1} \pi_{b}^{I}.
$$
\end{dfn}

Remark that in order to predict as well as the best predictor in $\Theta(b)$,
the prior
distribution has to be defined on $\Theta(b+1)$, for technical reasons that will
become
clear in the proofs (see the appendix). We are now ready to give the definition
of the Gibbs
estimator.

\begin{dfn}[Gibbs estimator]
We define, for any $b>0$ and $\lambda>0$, $ \hat{\rho}_{\lambda,b} $ such that
$$ \frac{d\hat{\rho}_{\lambda,b}}{d\pi_{b}}(\theta)
     = \frac{\exp\left[-\lambda
r(\theta)\right]}{\int_{\Theta(b)}\exp[-\lambda r] d\pi_{b}},$$
and we put
\begin{equation}
\label{gibbsdfn}
\hat{\theta}_{\lambda,b} = \int_{\Theta(b)}\theta
\hat{\rho}_{\lambda}(d\theta) .
\end{equation}
\end{dfn}

The parameter $\lambda$ is called the inverse temperature parameter. Its choice
is a problem in practice, see the discussions in
\cite{Catoni2003,Catoni2004,Catoni2007,AlquierPAC}.
In theory, we will see that $\lambda$ of the order $n$ will lead to fast rates
for prediction. In practice, $\lambda = n/\hat{{\rm var}}(X)$ leads to
satisfying results in
our simulations, where $\hat{{\rm var}}(X)$ is the empirical variance of the
observed time series.
The practical computation of $\hat{\theta}_{\lambda,b}$ can also be a problem.
In \cite{DT1} a Langevin Monte-Carlo algorithm is used. Here, as in \cite{AL},
the Reversible Jump
MCMC of \cite{RJMCMC} is used, see Section~\ref{section_algo}.

\section{Theoretical results}
\label{section_theorem}

\begin{thm}[Oracle inequality for the Gibbs estimator]
\label{mainthm}
Assume that $\|\overline{\theta}\|_{1}<b$ and that there exists a
constant $\Phi(q)$ such that for any $n\in\mathbb{N}$, $ \Phi(q) \geq
K^{(n)}_{\phi}(q)$.
Choose
$$\eta\in\left(0, \frac{16}{\Phi(q)} \right]\quad\mbox{and}\quad\lambda =
\frac{\eta(n-q)}{64 \Phi(q) (2+b)^2 B^2 } .$$
We have, with probability at least $1-\varepsilon$ on the drawing of the sample
$(X_1,\cdots,X_n)$,
\begin{multline*}
R(\hat{\theta}_{\lambda,b}) - R(\overline{\theta})
\leq
\inf_{
\tiny{
\begin{array}{c}
I\subset\{1,...,p\}
\\
|I|< \frac{\eta(n-q)}{32\Phi(q) (2+b)^2}
\\
\theta \in \Theta_{I}(b)
\end{array}
}
}
\Biggl\{
\left(\frac{2+\eta}{2-\eta}\right) \Bigl(R(\theta) - R(\overline{\theta}) \Bigr)
\\
+ \frac{64\Phi(q)(2+b)^2B^2}{(n-q)\eta}\left[|I| \left( B +
2 \log\left( \frac{Bbp{\rm e}}{|I|}\sqrt{\frac{2\eta
(n-q)}{|I|}}\right)\right)+2\log\left(\frac{2}{\varepsilon}\right)\right].
\Biggr\}
\end{multline*}
\end{thm}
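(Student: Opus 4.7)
The plan is to follow the PAC-Bayesian route advertised in the introduction, combining three ingredients: a Laplace-transform control for $\phi$-mixing sequences (Samson's inequality) packaged into the general oracle bound of Lemma~\ref{PACBAYES}; a variance--excess-risk inequality specific to the quadratic loss, which is what produces the fast rate; and a judicious choice of posterior that extracts the sparsity term.

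First, I would introduce the centered excess loss
$T_{i}(\theta):=[X_{i}-f_{\theta}(X_{i-1},\dots,X_{i-q})]^{2}-[X_{i}-f_{\overline{\theta}}(X_{i-1},\dots,X_{i-q})]^{2}$
so that $\E T_{i}(\theta)=R(\theta)-R(\overline{\theta})$ and $r(\theta)-r(\overline{\theta})=(n-q)^{-1}\sum_{i}T_{i}(\theta)$. Factoring $T_{i}=(2X_{i}-f_{\theta}-f_{\overline{\theta}})(f_{\overline{\theta}}-f_{\theta})$ and using $\|g_{j}\|_{\infty}\leq B$ together with $\|\theta\|_{1},\|\overline{\theta}\|_{1}<b$ yields the uniform bound $|T_{i}(\theta)|\lesssim B^{2}(2+b)^{2}$ and, crucially, the Bernstein-type variance inequality $\mathrm{Var}(T_{i}(\theta))\lesssim B^{2}(2+b)^{2}\,\E T_{i}(\theta)$, which holds because $\overline{\theta}$ minimizes $R$. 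This variance bound is the quadratic-loss ingredient that makes the $1/n$ rate accessible.

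Next, Samson's inequality bounds the exponential moment of $\sum_{i}[T_{i}(\theta)-\E T_{i}(\theta)]$ by a Bernstein expression in which $K_{\phi}^{(n)}(q)\leq\Phi(q)$ replaces the iid sample size. Combining with the variance inequality and lifting to any measure $\rho\ll\pi_{b}$ through the Donsker--Varadhan variational formula (this is the content of Lemma~\ref{PACBAYES}) gives, with probability at least $1-\varepsilon/2$, simultaneously in $\rho$,
\begin{equation*}
\int(R-R(\overline{\theta}))d\rho\leq\int(r-r(\overline{\theta}))d\rho+\frac{c\lambda\Phi(q)(2+b)^{2}B^{2}}{n-q}\int(R-R(\overline{\theta}))d\rho+\frac{KL(\rho,\pi_{b})+\log(2/\varepsilon)}{\lambda}.
\end{equation*}
The choice $\lambda=\eta(n-q)/[64\Phi(q)(2+b)^{2}B^{2}]$ turns the middle term into $(\eta/2)\int(R-R(\overline{\theta}))d\rho$. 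A symmetric bound on the $\varepsilon/2$ side controls $r(\overline{\theta})-R(\overline{\theta})$ and, after rearrangement, produces the factor $(2+\eta)/(2-\eta)$. Jensen's inequality applied to the convex $R$ gives $R(\hat{\theta}_{\lambda,b})\leq\int R\,d\hat{\rho}_{\lambda,b}$, and the Gibbs posterior's variational characterization $\hat{\rho}_{\lambda,b}=\arg\min_{\rho}[\lambda\int r\,d\rho+KL(\rho,\pi_{b})]$ lets me plug in any $\rho$ of my choice on the right-hand side.

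Finally, I localize the posterior. For each $I\subset\{1,\dots,p\}$ with $|I|<\eta(n-q)/[32\Phi(q)(2+b)^{2}]$ and each $\theta^{\star}\in\Theta_{I}(b)$, take $\rho$ to be $\pi_{b}^{I}$ restricted to an $\ell^{1}$-ball around $\theta^{\star}$ of radius $\delta$---this ball lies in $\Theta_{I}(b+1)$ thanks to the ``$+1$'' slack in the definition of $\pi_{b}^{I}$. Lipschitzness of $\theta\mapsto r(\theta),R(\theta)$ (with constant $\lesssim B^{2}(2+b)$) bounds $\int r\,d\rho-r(\theta^{\star})$ and $\int R\,d\rho-R(\theta^{\star})$ by $O(\delta B^{2}(2+b))$, while the explicit prior weights $2^{-|I|-1}{p\choose|I|}^{-1}$ and the uniform measure on $\Theta_{I}(b+1)$ give $KL(\rho,\pi_{b})\leq|I|\log(1/\delta)+|I|\log(Bbpe/|I|)+O(|I|)+\log 2$. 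Tuning $\delta\sim|I|^{1/2}[2\eta(n-q)]^{-1/2}/(Bb)$ balances the Lipschitz slack against $|I|\log(1/\delta)$ and reproduces exactly the $\log\bigl(\frac{Bbpe}{|I|}\sqrt{2\eta(n-q)/|I|}\bigr)$ in the theorem. Taking the infimum over $(I,\theta^{\star})$ closes the argument.

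The main obstacle is obtaining a Laplace-transform control under $\phi$-mixing with a \emph{variance-dependent} (not just uniform) right-hand side; Samson's inequality supplies precisely this, and the bulk of the bookkeeping is reconciling its constants with the quadratic-loss variance bound to land cleanly at the $(2+\eta)/(2-\eta)$ factor. Once Lemma~\ref{PACBAYES} is in hand, the localization step is essentially a volume computation.
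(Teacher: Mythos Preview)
Your proposal is correct and follows the paper's proof essentially step for step: Samson's Bernstein inequality together with the quadratic-loss identity $\E[(f_\theta-f_{\overline{\theta}})^2]=R(\theta)-R(\overline{\theta})$ (valid because $\overline{\theta}$ minimizes $R$ over the linear span) yields Lemma~\ref{exprisk}, which via Donsker--Varadhan becomes Lemma~\ref{PACBAYES}; then the Gibbs variational property, the second inequality of Lemma~\ref{PACBAYES}, Jensen, and a uniform posterior on a small $\ell_1$-ball in $\Theta_I(b+1)$ finish the argument. One small slip in your localization: the paper does not use a first-order Lipschitz bound but the second-order bound $\int R\,d\rho_{\delta,I,\theta}\leq R(\theta)+B^{2}\delta^{2}$, obtained because $R$ is quadratic and the $\ell_1$-ball is symmetric about $\theta$ so the linear term integrates to zero; your stated optimal $\delta\asymp\sqrt{|I|/\lambda}$ is precisely the one that balances this $B^{2}\delta^{2}$ against $|I|\log(1/\delta)/\lambda$, not a linear-in-$\delta$ slack, so once you replace ``Lipschitzness'' by this quadratic argument the constants in the theorem come out exactly.
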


The full proof is given in the appendix. In order to understand
this result, it is particularly useful to think of a particular case where
there is a sparse optimal predictor: we assume that there is a
$\overline{\theta}\in
\arg\min_{\Theta(b)} R$ that has only a few number $p_{0}$ of non-zero
coordinates.
This is the classical ``sparsity'' assumption. Then in this case, taking
$\theta=\overline{\theta}$
in the previous result leads to
\begin{multline}
\label{corollary}
R(\hat{\theta}_{\lambda,b}) - R(\overline{\theta})
\\
\leq \frac{64\Phi(q)(2+b)^2B^2}{(n-q)\eta}\left\{ p_{0} \left[ B +
2 \log\left( \frac{Bbp{\rm e}}{p_{0}}\sqrt{\frac{2\eta
(n-q)}{p_{0}}}\right)\right]
+2\log\left(\frac{2}{\varepsilon}\right)\right\}
\end{multline}
for $n$ large enough - actually, $n> q+ p_{0}[32 \Phi(q)
(2+b)^2]/\eta$.
We obtain that this is not the true dimension $p$ of $\Theta$
that determines a rate $p/n$, but the intrinsic dimension $p_{0}$ of
$\overline{\theta}$
as the rate is $p_{0}\log(pn)/n$. With iid observations, \cite{DT1,AL} obtained
the same
result, with rate $p_{0}\log(p)/n$. In \cite{gerchi}, the same rate is reached
in the context of prediction of individual sequences.

Note that of course the strength of Theorem \ref{mainthm} when compared to
Inequality~\ref{corollary}
is that it ensures that $\hat{\theta}_{\lambda,b}$ will give good prediction not
only when
$\overline{\theta}$ is sparse, but also when it can only be approximated by a
sparse parameter $\theta$.

\begin{rmk}
The value of $\lambda$ proposed in the Theorem depends on the $\phi$-mixing
coefficients
of the time series. Of course, these coefficients are unknown. One can check in
the proof of
Theorem \ref{mainthm} that any $\lambda$ of the order of $n$ would lead to the
same rate of
convergence, but with less precise constants. However, in practice, this does
not tell us
how to calibrate $\lambda$. It is of course possible to use a procedure such as
cross-validation.
However, in \cite{DT1} or \cite{AL}, it is observed that the value $\lambda =
n/(4\sigma^2)$
or $\lambda = n/(2\sigma^2)$, where $\sigma^2$ is the variance of then noise,
performs well in practice, 
and receives a theoretical justification in the iid setting. So we propose here
the heuristic
value $\lambda = n/\hat{{\rm var}}(X)$ leads to satisfying results in
our simulations, where $\hat{{\rm var}}(X)$ is the empirical variance of the
observed time series.
We will see in Section~\ref{section_algo} that it performs well on a
set of simulations.
\end{rmk}

\section{Some examples of $\phi$-mixing processes}
\label{sectionexamples}

In this section we study the behavior of the prediction procedure on some
classical $\phi$-mixing processes. In all the section $(\epsilon_t)$ denotes an
iid sequence called the innovations.
\subsection{The AR($p$) model}
We consider the case where the observations $(X_t)$ satisfy an AR($p$) model:
\begin{equation}
X_t=\sum_{j=1}^pa_{j}X_{t-j}+\epsilon_t, \qquad\forall t\in\Z.
\end{equation}
Here both $p\in\{1,2, \ldots\} $ and $(a_j)$ are unknown, $(\epsilon_t)$ is
bounded with a distribution possessing an absolutely continuous component.
Assume that $\mathcal A(z)=\sum_{j=1}^pa_jz^j$ has no root inside the unit disk
in $\C$. Then it exists a stationary solution $(X_t)$ that is an exponentially
$\phi$-mixing processes, in the sense that the coefficients $\phi_r$ decay
exponentially fast, see \cite{Athreya86}.

\subsection{The MA($q$) model}
We consider now observations $(X_t)$ such that
$X_t=\sum_{j=1}^qb_j\epsilon_{t-j}$ for all $t\in\Z$. Assume that $\mathcal
B(z)=\sum_{j=1}^qb_jz^j$ has no root inside the unit disk in $\C$ so that
$(X_t)$ is invertible (admits an AR($\infty$) representation). By definition the
process $(X_t)$ is stationary and $\phi$-dependent - it is even $q$-dependent,
in the sense that $\phi_r = 0$ for $r>q$. Moreover it is bounded iff the innovations
are bounded. So this process satisfies the assumptions of Theorem \ref{mainthm}.

\subsection{Non linear models}
Consider an extension of the AR($p$) model of the form
\begin{equation}
X_t=F(X_{t-1},\ldots,X_{t-p};\epsilon_t),\qquad\forall t\in\Z.
\end{equation}
To prepare the general case we recall some material from \cite{Meyn1993}. Remember that
the observations are assumed to belong to the compact set $[-B,B]$. The
Lagrange stability, irreducibility and aperiodicity conditions hold when the innovations admits a lower
semi-continuous density on $[-B,B]$ and for any $|x|\le B$ we have 
$$
[-B,B]=A^+(x):=\{ F_k(x,w_1,\ldots,w_k); \quad
k\ge1,\,(w_1,\ldots,w_k)\in\mbox{Support}^k(\epsilon)\}
$$
with $F_k:\R^{k+1}\mapsto\R$ defined recursively by the relation
$F_{k+1}(\cdot,w)=F(F_k(\cdot),w)$, $F_1=F$. A direct application of Proposition
7.5 of \cite{Meyn1993} yields that $(X_t)$ is a T-chain (we refer to
\cite{Meyn1993} for the definition) if $F_1(x,w)$ is continuously differentiable
on $w$ and for each $x_0\in \R^p$ there exists $(w_k)_{1\le k\le p}$ such that 
$\partial F_k/\partial w_k(x_0,w_1,\ldots,w_k)\neq0$ for all $1\le k\le p$. For
example the generalized AR-GARCH models of the form $F(x,w)=R(x)+\sigma(x)w$
with $R$ and $\sigma>0$ continuously differentiable is a T-chain.\\

Assume that  $(X_t)$ is an irreducible, aperiodic, Lagrange stable T-chain. Then it satisfies the Doeblin
condition and is thus exponentially $\phi$-mixing, see Theorem 16.2.7 of
\cite{Meyn1993}.

\section{Implementation and simulations}
\label{section_algo}

\subsection{RJMCMC method}

The Gibbs estimator, given by~\eqref{gibbsdfn}, takes the form of an integral
over a large dimensional space. It can thus be computed by Monte Carlo methods.
This is actually a classical approach for Bayesian estimators, see e.g.
\cite{Christian2,ChristianOld}. Here, we use the RJMCMC algorithm - Reversible
Jumb Markov Chain Monte Carlo, \cite{RJMCMC}. This method is implemented for
example in \cite{AL} to compute a Gibbs estimator that takes exactly the same
form than ours.

\subsection{Simulations study in the AR case}

We compare here the Gibbs estimator given by~\eqref{gibbsdfn} to the ``classical
approach'' in the AR case. This approach, for example as implemented in the R
software (\cite{R}), computes the least square estimator in each
submodel AR$(p)$ and then selects the order $p$ by Akaike's AIC criterion \cite{aic}.

We generate the data according to the following models:
\begin{align}
\label{align1}
X_t & = 0.5 X_{t-1} + 0.1 X_{t-2} + \varepsilon_{t}
\\
\label{align2}
X_{t} & = 0.6 X_{t-4} + 0.1 X_{t-8} + \varepsilon_{t}
\\
\label{align3}
X_{t} & = \cos (X_{t-1})\sin(X_{t-2}) + \varepsilon_{t}
\end{align}
where $\varepsilon_{t}$ is the innovation. We will use two models for the innovation:
the uniform case, $\varepsilon_{t}\sim\mathcal{U}[-a,a]$, and the Gaussian case,
$\varepsilon_t \sim \mathcal{N}(0,\sigma^{2})$.
In the first case, the processes defined in~\eqref{align1}, \eqref{align2} and~\eqref{align3} satisfy the assumptions
of Theorem \ref{mainthm} (see Section~\ref{sectionexamples}) while the Gaussian case
is more classical in statistics, so it is worth testing if our method performs well in this context
too - even if our method does not receive any theoretical justification in this case, as it is
show in \cite{Doukhan1994} that autoregressive processes with gaussian noise are not $\phi$-mixing.
We take $\sigma=0.4$ and $a=0.70$ (In both cases this leads to ${\rm Var}(\varepsilon_{t})\simeq
0.16$).
The Gibbs estimator is used on all the possible AR models as in Example \ref{examar};
we fix $q=20$ and $\lambda = n/\hat{{\rm var}}(X)$, where $\hat{{\rm var}}(X)$ is the empirical
variance of the observed time series. We compare its performances to the ones of
AIC criterion as implemented in the R software and to the basic least square estimator
in the model $AR(q)$ - that we will call ``full model''. The experimental design is the following:
for each model, we simulate a time series of length $2n$, use the observations $1$ to $n$ as
a learning set and $n+1$ to $2n$ as a test set. We report the performances on the test set.
We take $n=100$ and $n=1000$ in the simulations. Each simulation is repeated 20 times, we report
on Table~\ref{tablesimu} the mean performance and standard deviation of each method.

\begin{table}[t!]
\caption{Performances of the Gibbs estimator, AIC and least square estimator in the full
model, on the simulations. Each simulation is repeated
20 times, we report on Table~\ref{tablesimu} the mean performance and standard deviation of
each method. We highlight the best result for each line. 
}
\label{tablesimu}
\begin{center}
\begin{tiny}
\begin{tabular}{|p{1.0cm}|p{1.0cm}|p{1.5cm}||p{1.6cm}|p{1.6cm}|p{1.6cm}|}
\hline
 $n$ & Model & Innovations & Gibbs & AIC & Full Model \\
\hline \hline
 $100$ & \eqref{align1} & unif.    & {\bf 0.165} (0.022) & {\bf 0.165} (0.023) & 0.182 (0.029) \\
       &                & Gaussian & 0.167 (0.023) & {\bf 0.161} (0.023) & 0.173 (0.027) \\
\hline
       & \eqref{align2} & unif.    & {\bf 0.163} (0.020) & 0.169 (0.022) & 0.178 (0.022) \\
       &                & Gaussian & {\bf 0.172} (0.033) & 0.179 (0.040) & 0.201 (0.049) \\
\hline
       & \eqref{align3} & unif.    & {\bf 0.174} (0.022) & 0.179 (0.028) & 0.201 (0.040) \\
       &                & Gaussian & {\bf 0.179} (0.025) & 0.182 (0.025) & 0.202 (0.031) \\
\hline \hline
 $1000$& \eqref{align1} & unif.    & {\bf 0.163} (0.005) & {\bf 0.163} (0.005) & 0.166 (0.005) \\
       &                & Gaussian & {\bf 0.160} (0.005) & {\bf 0.160} (0.005) & 0.162 (0.005) \\
\hline
       & \eqref{align2} & unif.    & {\bf 0.164} (0.004) & 0.166 (0.004) & 0.167 (0.004) \\
       &                & Gaussian & {\bf 0.160} (0.008) & 0.161 (0.008) & 0.163 (0.008) \\
\hline
       & \eqref{align3} & unif.    & {\bf 0.171} (0.005) & 0.172 (0.006) & 0.175 (0.006) \\
       &                & Gaussian & {\bf 0.173} (0.009) & {\bf 0.173} (0.009) & 0.176 (0.010) \\
\hline
\end{tabular}
\end{tiny}
\end{center}
\vspace*{-6pt}
\end{table}

It is interesting to note that our estimator performs better on Model~\eqref{align2}
and Model~\eqref{align3} while AIC performs slightly better on Model~\eqref{align1}.
The differences tends to
be less perceptible when $n$ grows - this is coherent with the fact that we develop
here a non-asymptotic theory. It is also interesting to note that our estimator seems
to work well even in the case of a Gaussian noise.

\section{Conclusion}

We proved that the Gibbs estimator can reach fast rates in the case of $\phi$-mixing
time series. It would now be interesting to extend this result to a more general
class of processes, e.g. weakly dependent ones. Note however the versions of
Bernstein's inequality known in the context of weak dependence (see e.g.
\cite{Dedecker2007a,devdep}) do not allow to reach this rate up to our knowledge.
More generally, the question of concentration of measure for time series is on a 
large part still open.

Another question is to provide a theoretical justification to our heuristic for the
tuining of $\lambda$ in practice.

\section{Proof of Theorem \ref{mainthm}}

\label{sectionproofs}

We start by a short overview of the proof. First, we state a result, Lemma \ref{exprisk},
that provides a control of the difference between the risk and the empirical risk of a predictor.
The main tool for the proof of this result is Samson's version of
Bernstein's inequality in Lemma \ref{lapmx}, that we remind in the appendix.
Lemma \ref{exprisk} is then used together with Donsker-Varadhan variational formula
(also reminded in the appendix, Lemma \ref{LEGENDRE}) to prove a PAC-Bayesian
type oracle inequality similar to the ones in \cite{Catoni2004}, Lemma \ref{PACBAYES},
that is the main tool used to prove Theorem \ref{mainthm}

\begin{lem}
\label{exprisk}
Under the hypothesis of Theorem \ref{mainthm}, we have, for any
$\theta\in\Theta(b+1)$, for any $0\le \lambda\le
(n-q)/[4(2+b)^2B^2\Phi^2 (q)]$,
\begin{equation*}
 \mathbb{E} \exp\left\{ \lambda  \left[
  \left(1-\frac{32 \Phi(q) \lambda(2+b)^2 B^2}{n-q}\right)
 \left(R(\theta)-R(\overline{\theta})\right) - r(\theta) + r(\overline{\theta})
      \right]\right\}
 \leq 1,
\end{equation*}
and
\begin{equation*}
 \mathbb{E} \exp\left\{ \lambda  \left[
  \left(1+\frac{32 \Phi(q) \lambda(2+b)^2 B^2}{n-q}\right)
 \left(R(\overline{\theta})-R(\theta)\right) - r(\overline{\theta}) + r(\theta)
      \right]\right\}
 \leq 1.
\end{equation*}
\end{lem}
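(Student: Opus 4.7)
I would prove both bounds by applying Samson's Bernstein-type Laplace control for $\phi$-mixing sequences (Lemma \ref{lapmx}) to the excess-loss increments, after first establishing a variance-to-expectation inequality (the Bernstein margin condition) that is the source of the fast rate. The two inequalities in the lemma correspond to the upper and lower tails of the same centered empirical process, so essentially one Laplace bound has to be produced and then used twice.

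\textbf{Step 1: reduction to a centered sum.} Define
$$T_i(\theta) = \bigl[X_i-f_\theta(X_{i-1},\ldots,X_{i-q})\bigr]^2 - \bigl[X_i-f_{\overline{\theta}}(X_{i-1},\ldots,X_{i-q})\bigr]^2.$$
By stationarity, $\E[T_i(\theta)] = R(\theta)-R(\overline{\theta})$ and $r(\theta)-r(\overline{\theta}) = (n-q)^{-1}\sum_{i=q+1}^{n} T_i(\theta)$. Writing $U_i=T_i-\E T_i$, the two claims of the lemma are equivalent to
$$\log \E\exp\!\left\{\pm\,\frac{\lambda}{n-q}\sum_{i=q+1}^{n} U_i\right\} \;\le\; \frac{32\Phi(q)\lambda^2(2+b)^2B^2}{n-q}\bigl(R(\theta)-R(\overline{\theta})\bigr),$$
i.e.\ a Bernstein-type exponential inequality for the centered empirical process in both directions.

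\textbf{Step 2: boundedness and the margin condition.} Factoring $a^2-b^2=(a-b)(a+b)$ with $a=X_i-f_\theta$ and $b=X_i-f_{\overline{\theta}}$ gives
$$T_i(\theta) = \bigl(f_{\overline{\theta}}-f_\theta\bigr)(X_{i-1},\ldots,X_{i-q}) \cdot \bigl(2X_i-f_\theta-f_{\overline{\theta}}\bigr).$$
Because $\theta\in\Theta(b+1)$ and $\|g_j\|_\infty\le B$, we have $\|f_\theta\|_\infty \le (b+1)B$ and similarly for $\overline{\theta}$; this is exactly where the $b+1$ in the prior support is needed. Hence $|T_i|\le 4(2+b)^2B^2$ pointwise and $\E[T_i^2] \le 4(2+b)^2B^2\,\E[(f_\theta-f_{\overline{\theta}})^2]$. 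The crucial margin step uses that $\overline{\theta}$ is an \emph{unconstrained} minimizer of $R$ on $\Theta=\R^p$, so the first-order condition $\E[(X_{q+1}-f_{\overline{\theta}})g_j(X_q,\ldots,X_1)]=0$ holds for every $j$; by linearity of $f_\theta$ in $\theta$, the bias–variance decomposition gives
$$R(\theta)-R(\overline{\theta}) = \E\bigl[(f_\theta-f_{\overline{\theta}})^2\bigr],$$
and therefore $\E[T_i^2] \le 4(2+b)^2B^2\,\bigl(R(\theta)-R(\overline{\theta})\bigr)$. This variance-to-expectation inequality is what ultimately turns a $1/\sqrt n$ rate into a $1/n$ rate.

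\textbf{Step 3: applying Samson's inequality and collecting constants.} Feed the boundedness and variance estimates into Lemma \ref{lapmx} for the $\phi$-mixing sequence $(\pm U_i)$. The Laplace transform is then controlled by a ratio whose numerator is $\lambda^2\Phi(q)\,\E[T_i^2]/(n-q)$, up to a Bernstein denominator $1-\lambda\cdot(\text{const})\cdot\Phi(q)/(n-q)$ coming from the uniform bound on $|U_i|$. The restriction $\lambda\le (n-q)/[4(2+b)^2B^2\Phi^2(q)]$ is precisely what keeps this denominator above $1/2$; inserting the margin bound for $\E[T_i^2]$ then yields the exponent $32\Phi(q)\lambda^2(2+b)^2B^2\bigl(R(\theta)-R(\overline{\theta})\bigr)/(n-q)$ of Step 1, uniformly in the sign. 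The same computation works verbatim with $-U_i$ in place of $U_i$, which gives the second inequality of the lemma.

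\textbf{Main obstacle.} The delicate part is not the argument structure but the bookkeeping of constants through Samson's inequality: one must verify that under the stated range of $\lambda$ the Bernstein denominator can be absorbed so that the right-hand side has exactly the prefactor $32\Phi(q)(2+b)^2B^2/(n-q)$ rather than something merely of that order. A second subtle point is that the margin identity in Step 2 relies on $\overline{\theta}\in\arg\min_{\R^p}R$ and on the linearity of $\theta\mapsto f_\theta$; both are used essentially, and both explain why the lemma is stated for $\theta\in\Theta(b+1)$ rather than the constrained set $\Theta(b)$.
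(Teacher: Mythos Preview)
Your proposal is essentially the paper's own argument: apply Samson's Laplace bound (Lemma~\ref{lapmx}) to the centered excess-loss increments, combine the sup-norm bound $4(2+b)^2B^2$ with the margin identity $\E[(f_\theta-f_{\overline{\theta}})^2]=R(\theta)-R(\overline{\theta})$ (which you justify more explicitly than the paper, via the first-order condition for the unconstrained minimizer $\overline{\theta}$), and read off the exponent. The structure and the key estimates are the same.

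One small inaccuracy in Step~3: in the version of Samson's inequality recorded as Lemma~\ref{lapmx} there is \emph{no} Bernstein denominator of the form $1-\lambda\cdot(\text{const})$. The bound is directly $\ln\E e^{\lambda(S-\E S)}\le 8K_{\phi^Z}N\sigma^2(f)\lambda^2$, valid under the range restriction $\lambda\le 1/(MK_{\phi^Z}^2)$. With $M=4(2+b)^2B^2/(n-q)$ and $K_{\phi^Z}\le\Phi(q)$, that restriction is exactly $\lambda\le (n-q)/[4(2+b)^2B^2\Phi^2(q)]$, and the constant $32$ then arises as $8\times 4$ (the $8$ from Samson and the $4$ from the variance bound), not from absorbing a denominator bounded below by $1/2$. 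Your heuristic ``keep the denominator above $1/2$'' happens to land on the right constant but misdescribes the mechanism; once you look at the actual statement of Lemma~\ref{lapmx} the bookkeeping is simpler than you suggest.
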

\begin{proof}[Proof of Lemma \ref{exprisk}]
We apply Samson's version of Bernstein's inequality (see Lemma \eqref{lapmx} in the Appendix)
to $N=n-q$, $Z_i=(X_{i+1},\ldots,X_{i+q})$,
\begin{multline*}
f(Z_i)= \frac{1}{n-q}\Bigl[R(\theta)-R(\overline \theta)
\\
         - \left(X_{i+q}-f_{\theta}(X_{i+q-1},\dots,X_{i+1})\right)^{2} +
\left(X_{i+q}-f_{\overline{\theta}}(X_{i+q-1},\dots,X_{i+1})\right)^{2}
\Bigr].
\end{multline*}
Note that we have:
$$ S(f)=[R(\theta)-R(\overline \theta)- r(\theta)+r(\overline
\theta)],$$
and the $Z_i$ are uniformly mixing with coefficients $\phi_{r}^Z
= \phi_{\lfloor r/q \rfloor }$.
Note that $K_{\phi^Z}=1+\sum_{r=1}^{n-q}\sqrt{\phi_{\lfloor r/q \rfloor}}=
K^{(n)}_\phi(q)\leq \Phi(q)$.
For any $\theta$ and $\theta'$ in $\Theta$ let us put
$$ V(\theta,\theta') = \E_{\P}
\left\{\left[\Bigl(X_{q+1}-f_{\theta}(X_{q},...,X_{1})\Bigr)^{2}
-\Bigl(X_{q+1}-f_{\theta'}(X_{q},...,X_{1})\Bigr)^{2}\right]^{2}\right\}. $$
Noticing that $\sigma^2(f) \le V(\theta,\overline\theta) / (n-q)^2$ and that
$\|f\|_\infty\le 4(2+b)^2 B^2 / (n-q)$, for any $0\le \lambda\le
(n-q)/[4(2+b)^2B^2\Phi^2 (q)]$,
we have
$$
\ln \E_{\P}
\exp\left[\lambda
\Bigl(R(\theta)-R(\overline{\theta})-r(\theta)+r(\overline{\theta})\Bigr)\right]
\leq  \frac{8 \Phi(q) \lambda^{2}  V(\theta,\overline{\theta})}{n-q}.$$
Notice also that
\begin{multline*}
V(\theta,\overline{\theta})
= \E_{\P} \left\{
\left[2X_{q+1}-(f_{\theta}+f_{\overline{\theta}})(X_{q},...,X_{1})\right]^{2}
\left[(f_{\theta}-f_{\overline{\theta}})(X_{q},...,X_{1})\right]^{2}
\right\}
\\
\leq (2+\|\theta\|_{1}+\|\overline{\theta}\|_{1})^{2} B^{2} \E_{\P} \left\{
\left[(f_{\theta}-f_{\overline{\theta}})(X_{q},...,X_{1})\right]^{2}
\right\}
\\
= (2+\|\theta\|_{1}+\|\overline{\theta}\|_{1})^{2} B^{2}
\left[R(\theta)-R(\overline{\theta})\right]
\leq
4 (2+b)^{2} B^{2} \left[R(\theta)-R(\overline{\theta})\right]
\end{multline*}
as $\theta\in\Theta(b+1)$ and $\overline{\theta}\in\Theta(b)\subset\Theta(b+1)$.
This proves
the first inequality of Lemma~\ref{exprisk}. The second inequality is proved
exacly in the same way, but replacing $f$ by $-f$.
\end{proof}

We are now ready to state the following key result. Note that the very classical
definition of the Kullback divergence $\mathcal{K}(\rho,\pi)$ is reminded in
the appendix.
\begin{lem}[PAC-Bayesian oracle inequality for a $\phi$-mixing process]
\label{PACBAYES}
Under the hypothesis of Theorem \ref{mainthm}, we have, for any
$0\le \lambda\le
(n-q)/[4(2+b)^2B^2\Phi^2(q)]$, for any $0<\varepsilon<1$,
\begin{equation*}
\mathbb{P} \left\{
\begin{array}{l}
\forall \rho \in\mathcal{M}_{+}^{1}(\Theta(b+1)), \\
 \left(1-\frac{32 \Phi(q)\lambda(2+b)^2 B^2}{n-q}\right)
 \left(\int R {\rm d} \rho - R(\overline{\theta}) \right)\leq \int r {\rm d}\rho
      - r(\overline{\theta})
     + \frac{\mathcal{K}(\rho,\pi) +
\log\left(\frac{2}{\varepsilon}\right)}{\lambda}
\\
\text{ and }
\\
 \int r {\rm d} \rho - r(\overline{\theta})
    \leq \left(\int R {\rm d}\rho - R(\overline{\theta})\right)
          \left(1+\frac{32 \Phi(q)\lambda(2+b)^2 B^2}{n-q}\right)
     + \frac{\mathcal{K}(\rho,\pi) +
\log\left(\frac{2}{\varepsilon}\right)}{\lambda}
\end{array}
\right\}
\geq 1-\varepsilon.
\end{equation*}
\end{lem}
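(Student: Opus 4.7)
The plan is the standard PAC-Bayesian route: turn the pointwise exponential-moment bounds of Lemma \ref{exprisk} into a uniform-in-$\rho$ statement via Fubini, Markov, and the Donsker--Varadhan variational formula.

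First, fix $\lambda$ in the prescribed range $[0,(n-q)/(4(2+b)^2 B^2\Phi^2(q))]$ and set, for brevity, $c_\lambda:=32\Phi(q)\lambda(2+b)^2 B^2/(n-q)$ and
$$h_+(\theta):=(1-c_\lambda)\bigl(R(\theta)-R(\overline\theta)\bigr)-r(\theta)+r(\overline\theta).$$
Lemma \ref{exprisk} says $\E\exp(\lambda h_+(\theta))\le1$ for every $\theta\in\Theta(b+1)$. Integrate this inequality against the prior $\pi_b$ (which is supported on $\Theta(b+1)$) and swap expectation and integration by Tonelli (the integrand is non-negative and jointly measurable), obtaining
$$\E\int_{\Theta(b+1)}\exp\bigl(\lambda h_+(\theta)\bigr)\,\pi_b(d\theta)\le 1.$$

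Second, apply Markov's inequality to the non-negative random variable above: with probability at least $1-\varepsilon/2$,
$$\log\int \exp(\lambda h_+)\,d\pi_b\;\le\;\log(2/\varepsilon).$$
On this event, invoke the Donsker--Varadhan formula (Lemma \ref{LEGENDRE}): for every $\rho\in\mathcal M_+^1(\Theta(b+1))$,
$$\log\int\exp(\lambda h_+)\,d\pi_b\;\ge\;\lambda\int h_+\,d\rho-\mathcal K(\rho,\pi_b),$$
where both sides are $+\infty$ when $\rho\not\ll\pi_b$ and the inequality is trivial in that case. Chaining the two bounds and rearranging yields exactly the first inequality of Lemma \ref{PACBAYES}, uniformly over $\rho$.

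Third, repeat the whole argument starting from the second bound of Lemma \ref{exprisk}, with $h_-(\theta):=-(1+c_\lambda)(R(\theta)-R(\overline\theta))+r(\theta)-r(\overline\theta)$, to obtain the mirror inequality on a second event of probability at least $1-\varepsilon/2$. A union bound across the two events gives the joint statement on an event of probability at least $1-\varepsilon$.

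There is no real obstacle beyond Lemma \ref{exprisk} itself; the only items that need care are verifying that Fubini applies (immediate, since $\exp(\lambda h_\pm)$ is non-negative and $(\theta,\omega)$-measurable), that the supremum over $\rho$ can be pulled \emph{inside} the probability statement (automatic, because the Donsker--Varadhan lower bound holds simultaneously for all $\rho$ on the same event), and that $\pi_b$ is indeed defined on $\Theta(b+1)$ rather than $\Theta(b)$ — this is exactly why the prior was constructed on the enlarged ball in the definition, and it ensures $\mathcal K(\rho,\pi_b)<\infty$ for the measures $\rho$ relevant to Theorem \ref{mainthm}.
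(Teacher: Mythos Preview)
Your proof is correct and follows essentially the same route as the paper's: integrate the exponential-moment bound of Lemma~\ref{exprisk} against the prior via Fubini, convert to a probability bound, invoke the Donsker--Varadhan formula (Lemma~\ref{LEGENDRE}) to make the statement uniform in $\rho$, then repeat for the mirror inequality and take a union bound. The only cosmetic difference is that the paper applies Donsker--Varadhan before the Markov step (using $e^x\ge\mathds{1}_{\mathbb{R}_+}(x)$) rather than after, which is equivalent since the variational formula is an equality; one small slip in your write-up is the phrase ``both sides are $+\infty$'' when $\rho\not\ll\pi_b$ --- in fact the right-hand side of your displayed lower bound is $-\infty$ there (because $\mathcal K(\rho,\pi_b)=+\infty$ while $h_+$ is bounded), which still makes the inequality trivial.
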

\begin{proof}[Proof of Lemma \ref{PACBAYES}]
 Let us fix $\varepsilon$, $\lambda$ and $\theta\in\Theta(b+1)$,
and apply the first inequality of
Lemma~\ref{exprisk}. We have:
\begin{equation*}
 \mathbb{E} \exp\left\{ \lambda  \left[
  \left(1-\frac{32 \Phi(q) \lambda(2+b)^2 B^2}{n-q}\right)
 \left(R(\theta)-R(\overline{\theta})\right) - r(\theta) + r(\overline{\theta})
      \right]\right\}
 \leq 1,
\end{equation*}
and we multiply this result by $\varepsilon/2$ and integrate it with
respect to $\pi_{b}({\rm d}\theta)$. Fubini's Theorem gives:
\begin{multline*}
 \mathbb{E} \int \exp\left\{ \lambda  \left[
  \left(1-\frac{32 \Phi(q)\lambda(2+b)^2 B^2}{n-q}\right)
 \left(R(\theta)-R(\overline{\theta})\right) - r(\theta) + r(\overline{\theta})+\log\Big(\frac \epsilon2\Big)
      \right]\right\} \pi_{b}({\rm d}\theta)
 \\
  \leq \frac{\varepsilon}{2}.
\end{multline*}
We apply Donsker-Varadhan variational formula (see Lemma~\ref{LEGENDRE} in the appendix)
and we get:
\begin{multline*}
 \mathbb{E}  \exp\Biggl\{ \sup_{\rho} \lambda  \Biggl[
  \left(1-\frac{32 \Phi(q)\lambda(2+b)^2 B^2}{n-q}\right)
 \left(\int R {\rm d}\rho-R(\overline{\theta})\right) - \int r {\rm d}\rho
 + r(\overline{\theta})+\log\Big(\frac \epsilon2\Big)
\\ - \mathcal{K}(\rho,\pi)
      \Biggr]\Biggr\}
  \leq \frac{\varepsilon}{2}.
\end{multline*}
As $e^{x}\geq \mathds{1}_{\mathbb{R}_{+}}(x)$, we have:
\begin{multline*}
\mathbb{P} \Biggl\{
\sup_{\rho} \lambda  \Biggl[
  \left(1-\frac{32 \Phi(q)\lambda(2+b)^2 B^2}{n-q}\right)
 \left(\int R {\rm d}\rho-R(\overline{\theta})\right) - \int r {\rm d}\rho
 + r(\overline{\theta})
      \\
+\log\Big(\frac \epsilon2\Big)\Biggr] - \mathcal{K}(\rho,\pi)\geq 0
 \Biggr\}
\leq \frac{\varepsilon}{2}.
\end{multline*}
Now, we follow the same proof again but starting with the second inequality of
Lemma~\ref{exprisk}. We obtain:
\begin{multline*}
\mathbb{P} \Biggl\{
\sup_{\rho} \lambda  \Biggl[
  \left(1+\frac{32 \Phi(q)\lambda(2+b)^2 B^2}{n-q}\right)
 \left(R(\overline{\theta})-\int R {\rm d}\rho\right) - r(\overline{\theta})
   + \int r {\rm d}\rho
\\
     +\log\Big(\frac \epsilon2\Big) - \mathcal{K}(\rho,\pi) \Biggr] \geq 0
 \Biggr\}
\leq \frac{\varepsilon}{2}.
\end{multline*}
A union bound ends the proof.
\end{proof}

We are now ready to give the proof of Theorem \ref{mainthm}.

\begin{proof}
First, we apply Lemma~\ref{PACBAYES}. From now, a work on the event
of probability at least $1-\varepsilon$ given by this lemma. In particular
we have
$\forall \rho \in\mathcal{M}_{+}^{1}(\Theta)$,
$$
\int R {\rm d} \rho - R(\overline{\theta})
\leq \frac{ \int r {\rm d}\rho
      - r(\overline{\theta})
     + \frac{\mathcal{K}(\rho,\pi) +
\log\left(\frac{2}{\varepsilon}\right)}{\lambda}
} {
 1-\frac{32 \Phi(q)\lambda(2+b)^2 B^2}{n-q}
}.
$$
For the sake of simplicity, during this proof, we will use the following
notation:
$$ C = 32 \Phi(q) (2+b)^2 B^2.$$
Taking $\rho = \hat{\rho}_{\lambda,b}$
leads to:
$$
\int R {\rm d} \hat{\rho}_{\lambda,b} - R(\overline{\theta})
\leq \frac{ \int r {\rm d}\hat{\rho}_{\lambda,b}
      - r(\overline{\theta})
     + \frac{\mathcal{K}(\hat{\rho}_{\lambda,b},\pi) +
\log\left(\frac{2}{\varepsilon}\right)}{\lambda}
} {
 1-\frac{\lambda C}{n-q}
}.
$$
We apply Lemma~\ref{LEGENDRE} to see that:
$$
\int R {\rm d} \hat{\rho}_{\lambda,b} - R(\overline{\theta})
\leq \inf_{\rho} \frac{ \int r {\rm d}\rho
      - r(\overline{\theta})
     + \frac{\mathcal{K}(\rho,\pi) +
\log\left(\frac{2}{\varepsilon}\right)}{\lambda}
} {
 1-\frac{\lambda C}{n-q}
}.
$$
Now, we use the second inequality of Lemma~\ref{PACBAYES} to see
that
\begin{multline}
\label{etape1}
\int R {\rm d} \hat{\rho}_{\lambda,b} - R(\overline{\theta})
\leq
\inf_{\rho} \frac{ \left(1+\frac{\lambda C}{n-q}\right)
\left( \int R {\rm d}\rho
      - R(\overline{\theta}) \right)
     + 2 \frac{\mathcal{K}(\rho,\pi) +
\log\left(\frac{2}{\varepsilon}\right)}{\lambda}
} {
 1-\frac{\lambda C}{n-q}
}
\\
\leq
\inf_{I\subset\{1,...,q\}} \inf_{\rho \ll \pi_{b}^{I}}
\frac{ \left(1+\frac{\lambda C}{n-q} \right)
\left( \int R {\rm d}\rho
      - R(\overline{\theta}) \right)
     + 2 \frac{  \mathcal{K}(\rho,\pi)  +
\log\left(\frac{2}{\varepsilon}\right)}{\lambda}
} {
 1-\frac{\lambda C}{n-q}
}
.
\end{multline}
By Jensen's inequality,
$$ \int R {\rm d} \hat{\rho}_{\lambda,b}  \geq R\left(
\hat{\theta}_{\lambda,b}\right) .$$
Also remark that, as soon as $\rho \ll \pi_{b}^{I}$,
\begin{multline*}
\mathcal{K}(\rho,\pi)
=
(|I|+1)\log(2) + \log {p \choose |I|} + \mathcal{K}(\rho,\pi_{b}^{I})
\\
\leq
(|I|+1)\log(2) + |I|\log \left(\frac{p{\rm e}}{|I|}\right) +
\mathcal{K}(\rho,\pi_{b}^{I})
\end{multline*}
(see, e.g., \cite{Catoni2003} page 190).
Now, for any $0<\delta<1$, for any
$I\subset\{1,...,p\}$, and
$\theta\in\Theta_{I}(B)$,
we take $\rho_{\delta,I,\theta}$ as the uniform measure on
$ \{ t\in\Theta_{I}(b): \|t-\theta\|_{1} \leq \delta \}$.
Note that as $\theta\in\Theta_{I}(B)$ and $\delta<1$, the support of
$\rho_{\delta,I,\theta}$
is included in $\Theta(b+1)$ the support of $\pi_b$. This is the reason why
$\pi_{b}$
is defined in this way. Inequality~\eqref{etape1}
leads to
\begin{multline*}
R\left( \hat{\theta}_{\lambda,b}\right)- R(\overline{\theta})
\\
\leq
\frac{1}{ 1-\frac{\lambda C}{n-q}}
\inf_{\delta>0} \inf_{I\subset\{1,...,q\}} \inf_{\theta\in\Theta_I (b)}
\Biggl\{ \left(1+\frac{\lambda C}{n-q} \right)
\left( R(\theta) + B^{2}\delta^{2}
      - R(\overline{\theta}) \right)
\\
     + 2 \frac{ (|I|+1)\log(2) + |I|\log \left(\frac{p{\rm e}}{|I|}\right)
+ |I|\log\left(\frac{b}{\delta}\right) +
\log\left(\frac{2}{\varepsilon}\right)}{\lambda}
\Biggr\}
\end{multline*}
and so, by choosing $\delta=\sqrt{|I|/(2B^2 \lambda)}$,
we get:
\begin{multline*}
R\left( \hat{\theta}_{\lambda,b}\right)- R(\overline{\theta})
\\
\leq
\frac{1}{ 1-\frac{\lambda C}{n-q}}
\inf_{\delta>0} \inf_{I\subset\{1,...,q\}} \inf_{\theta\in\Theta_I (b)}
\Biggl\{ \left(1+\frac{\lambda C}{n-q} \right)
\left( R(\theta)
      - R(\overline{\theta}) \right)
\\
   +  \frac{ |I|\left(B +
 2 \log\left(\frac{2B bp{\rm e}}{|I|}\sqrt{\frac{\lambda}{|I|}}\right)\right) +
2 \log\left(\frac{4}{\varepsilon}\right)}{\lambda}
\Biggr\}
\end{multline*}
Remember that $\lambda\le
(n-q) c$ where we put for short $c=1/[4(2+b)^2B^2 \Phi^{2}(q)]$. Let us take
$\lambda=\eta (n-q)/(2C) $ for some constant
$\eta$. Remark that $\eta  \leq 2cC$ ensures that $\lambda \le (n-q) c$ while
we need to impose $|I|< \eta B^{2}(n-q) / C$ in order to ensure that $\delta<1$.
We obtain:
\begin{multline*}
\P\Biggl\{
R(\hat{\theta}_{\lambda,b}) - R(\overline{\theta})
\leq
\inf_{
\tiny{
\begin{array}{c}
I\subset\{1,...,p\}
\\
|I|< \eta B^{2}(n-q)/C
\\
\theta \in \Theta_{I}(b)
\end{array}
}
}
\Biggl[
\left(\frac{2+\eta}{2-\eta}\right) \Bigl(R(\theta) - R(\overline{\theta}) \Bigr)
\\
+ \frac{2C}{(n-q)\eta}\left(|I| \left( B +
2 \log\left( \frac{Bbp{\rm e}}{|I|}\sqrt{\frac{2\eta
(n-q)}{|I|}}\right)\right)+2\log\left(\frac{2}{\varepsilon}\right)\right)
\Biggr]\Biggr\}
\geq 1-\varepsilon.
\end{multline*}
We end the computation by the remark that
$ \lambda = \eta(n-q)/(2C) = \eta(n-q) / [64 \Phi(q) (2+b)^2 B^2]$ and that
$\eta  \leq 2cC = 16/\Phi(q)$.
\end{proof}

\bibliography{biblio}

\appendix

\section{Samson's version of Bernstein's inequality and Donsker-Varadhan variational formula}

\begin{lem}[\cite{samson} (page 460, line7)]
\label{lapmx}
Let $N\in\mathbb{N}$.
Let $(Z_i)_{i\in\mathbb{Z}}$ be a stationary process, let $(\phi_{r}^Z)$ denote
its
$\phi$-mixing coefficients, let $f$ be a measurable
function $\mathbb{R}\rightarrow [-M,M]$  and let
$$ S_N (f) := \sum_{i=1}^{N} f(Z_i). $$
Then:
\begin{equation*}
\ln \E(\exp(\lambda( S(f)-\E S(f))))\le 8K_{\phi^Z} N \sigma^2(f)\lambda^2,
\mbox{ for all }0\le \lambda\le 1/(M K_{\phi^Z}^2) ,
\end{equation*}
where $K_{\phi^Z}=1+\sum_{r=1}^N \sqrt{\phi^{Z}_r}$ and
$ \sigma^{2}(f) = {\rm Var}\left[f(Z_i)\right] $.
\end{lem}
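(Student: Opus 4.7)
The plan is to follow Samson's 2000 Annals of Probability paper: his general transportation-entropy concentration inequality for $\phi$-mixing processes, applied to the particular functional $F(z_1,\ldots,z_N) = \sum_i f(z_i)$, specializes to the stated Bernstein-type Laplace bound.

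First, I would introduce the upper-triangular $N \times N$ matrix $\Gamma = (\gamma_{ij})$ with $\gamma_{ii}=1$, $\gamma_{ij}=\sqrt{\phi_{j-i}^{Z}}$ for $i<j$, and $\gamma_{ij}=0$ otherwise. The elementary row-sum estimate
$$ \|\Gamma\|_{\mathrm{op}} \le \max_{i} \sum_j |\gamma_{ij}| \le 1 + \sum_{r=1}^{N} \sqrt{\phi_r^Z} = K_{\phi^Z} $$
is the vehicle through which the mixing sum enters the final bound.

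Second, the analytic core is Samson's quadratic transportation-cost inequality for the law $\mathbb{P}$ of $(Z_1,\ldots,Z_N)$: for every $\mathbb{Q} \ll \mathbb{P}$ there is a coupling $\pi$ of $(\mathbb{Q},\mathbb{P})$ satisfying
$$ \sum_{i=1}^{N} \left(\int \mathds{1}_{\{Y_i \ne Z_i\}} \, d\pi(Y,Z)\right)^{2} \le \|\Gamma\|_{\mathrm{op}}^{2}\, \mathcal{K}(\mathbb{Q},\mathbb{P}). $$
Samson establishes this by a recursive construction: successively conditioning on $Z_1,\ldots,Z_{i-1}$, one applies the maximal-coupling characterization of the $\phi$-mixing coefficients to bound each conditional transport cost in terms of the preceding couplings weighted through the rows of $\Gamma$; a Pinsker/Cauchy--Schwarz pass then delivers the quadratic form above.

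Third, by the Bobkov--Götze duality, this transportation inequality is equivalent to a Gaussian concentration bound for every function separately Lipschitz in the Hamming metric. Applied to $F(z) = \sum_i f(z_i)$ with $|f|\le M$, whose separate Hamming oscillations are bounded by $2M$, one obtains in the Gaussian regime $\ln \mathbb{E} \exp(\lambda(S_N - \mathbb{E} S_N)) \le cNM^{2}K_{\phi^Z}^{2}\lambda^{2}$. To sharpen $M^2$ into the variance $\sigma^{2}(f)$, I would invoke Samson's self-bounding refinement: decomposing $f(Z_i) - \mathbb{E} f(Z_i)$ into a quadratic (variance) contribution and a bounded overshoot, then using the restriction $\lambda \le 1/(MK_{\phi^Z}^{2})$ to absorb the latter. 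The tighter Frobenius-type control of $\Gamma$, rather than its operator norm, produces the single $K_{\phi^Z}$ factor in front of $N\sigma^{2}(f)\lambda^{2}$.

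The main obstacle is the recursive coupling step: preserving the tight Frobenius-type norm needed to replace $M^2$ by $\sigma^{2}(f)$ in the leading quadratic term, rather than the looser operator-norm bound that would only yield Gaussian-in-$M^{2}$ concentration. The admissibility condition $\lambda \le 1/(MK_{\phi^Z}^{2})$ is the weak-dependence analog of Bernstein's $\lambda < 3/M$ constraint and emerges naturally from this absorption step. Since this full argument is precisely what Samson's paper establishes, the proof of Lemma \ref{lapmx} ultimately reduces to reading off the specialization to $F(z) = \sum_i f(z_i)$ together with the constants $8$ and $1$ stated on page~460, line~7.
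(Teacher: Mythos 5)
There is no ``paper's own proof'' to compare against here: Lemma~\ref{lapmx} is not proved in the paper at all. It is imported verbatim from Samson's article (cited by page and line) and used as a black box in the proof of Lemma~\ref{exprisk}. In that sense your proposal is doing the same thing the paper does --- deferring entirely to \cite{samson} --- except that you dress the deferral up as a proof sketch. As written, your text is not a proof: the recursive coupling, the duality step, and the ``self-bounding refinement'' are all asserted rather than carried out, and your closing sentence concedes that the argument ``reduces to reading off'' Samson's statement. That is acceptable if the lemma is treated as a citation (as the authors do), but it should then be presented as a citation, not as a derivation.

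One substantive caveat on the faithfulness of your summary of Samson's argument. The route you describe --- transportation-cost inequality for the law of $(Z_1,\dots,Z_N)$, then Bobkov--G\"otze duality applied to functions that are separately Lipschitz in the Hamming metric --- is the mechanism behind Samson's \emph{convex-Lipschitz} concentration result, and it naturally produces a purely Gaussian bound of order $N M^2 \|\Gamma\|^2 \lambda^2$; it does not by itself yield the variance $\sigma^2(f)$ in the leading term. The Bernstein-type Laplace-transform bound actually quoted in Lemma~\ref{lapmx} (with $\sigma^2(f)$ and the admissibility window $\lambda \le 1/(M K_{\phi^Z}^2)$) comes from a different part of Samson's paper: the entropy method (modified log-Sobolev/tensorization in the style of Ledoux--Massart) for empirical processes, into which the coupling construction enters through bounds on the conditional increments. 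Your invocation of a ``self-bounding refinement'' gestures at this but conflates the two arguments. Since the paper supplies no proof either, this is a defect in the accuracy of your reconstruction of \cite{samson} rather than a gap relative to the paper; but if you intend the sketch to be expandable into a real proof, the duality step should be replaced by the entropy-method argument.
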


\begin{dfn}
Given a measurable space $(E,\mathcal{E})$ we let
$\mathcal{M}_{+}^{1}(E)$ denote the set of all probability measures
on $(E,\mathcal{E})$. The Kullback divergence is a pseudo-distance on
$\mathcal{M}_{+}^{1}(E)$ defined, for any
$(\pi,\pi')\in[\mathcal{M}_{+}^{1}(E)]^{2}$ by the equation
$$
\mathcal{K}(\pi,\pi')= \left\{
\begin{array}{l}
\pi[\log(d \pi/d \pi')] \quad \text{ if } \pi \ll \pi',
\\
\\
+ \infty \quad \text{ otherwise.}
\end{array}
\right.
$$
with the convention that $\pi[h] = \int h(x) \pi({\rm d}x) $ for any measurable
function $h$.
\end{dfn}
\begin{lem}[\cite{Donsker1975} variational formula]
\label{LEGENDRE}
For any $\pi$ in the set $\mathcal{M}_{+}^{1}(E)$,
for any measurable function $h:E\rightarrow\mathbb{R}$ such
that $ \pi[\exp (h)]<+\infty $ we have:
\begin{equation} \label{lemmacatoni}
\pi[\exp
(h)]=\exp\left(\sup_{\rho\in\mathcal{M}_{+}^{1}(E)}\biggl(\rho
[h]-\mathcal{K}(\rho,\pi)\biggr)\right),
\end{equation}
with convention $\infty-\infty=-\infty$. Moreover, as soon as $h$ is
upper-bounded on the support of $\pi$, the supremum with respect to
$\rho$ in the right-hand side is reached for the Gibbs measure
$\pi\{h\}$ defined by
$$ \pi\{h\}({\rm d}x) = \frac{ e^{h(x)} \pi({\rm d}x) }{ \pi[\exp(h)]}.  $$
\end{lem}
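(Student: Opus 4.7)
The plan is to prove the Donsker--Varadhan variational formula by exhibiting the Gibbs measure $\pi\{h\}$ as the explicit optimizer and expressing the gap $\log \pi[\exp(h)] - (\rho[h] - \mathcal{K}(\rho,\pi))$ as a non-negative Kullback term. First I would verify that $\pi\{h\}$ is a well-defined probability measure on $(E,\mathcal{E})$: the hypothesis $\pi[\exp(h)] < +\infty$, together with $\pi[\exp(h)] > 0$ (as the integral of a strictly positive function against a probability measure), legitimates the normalization, and the density $e^{h}/\pi[\exp(h)]$ is strictly positive $\pi$-a.s., so $\pi\{h\}$ and $\pi$ share the same null sets.

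Next I would split on absolute continuity. If $\rho$ is not absolutely continuous with respect to $\pi$, then $\mathcal{K}(\rho,\pi) = +\infty$ by definition, so $\rho[h] - \mathcal{K}(\rho,\pi) = -\infty$ under the convention $\infty - \infty = -\infty$ adopted in the statement, and the target inequality is trivial. In the informative case $\rho \ll \pi$, I would note that $\rho \ll \pi\{h\}$ as well, and apply the chain rule for Radon--Nikodym derivatives, which gives
\begin{equation*}
\log\frac{d\rho}{d\pi\{h\}} = \log\frac{d\rho}{d\pi} - h + \log \pi[\exp(h)].
\end{equation*}
Integrating against $\rho$ yields the key identity
\begin{equation*}
\mathcal{K}(\rho,\pi\{h\}) = \mathcal{K}(\rho,\pi) - \rho[h] + \log \pi[\exp(h)],
\end{equation*}
and since the Kullback divergence is non-negative (Jensen's inequality applied to $-\log$), rearranging gives
\begin{equation*}
\log \pi[\exp(h)] = \rho[h] - \mathcal{K}(\rho,\pi) + \mathcal{K}(\rho,\pi\{h\}) \geq \rho[h] - \mathcal{K}(\rho,\pi),
\end{equation*}
with equality iff $\mathcal{K}(\rho,\pi\{h\}) = 0$, i.e., $\rho = \pi\{h\}$. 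Taking the supremum over $\rho \in \mathcal{M}_{+}^{1}(E)$ and exponentiating yields \eqref{lemmacatoni}, and the attainment at the Gibbs measure $\pi\{h\}$ follows from the equality case.

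The main technical obstacle is not the algebraic manipulation, which is essentially one line, but ensuring that every quantity in the chain rule identity is meaningful. I need $\rho[h]$ to be well-defined whenever the right-hand side of the identity is to be compared with $\log \pi[\exp(h)]$: the assumption that $h$ is upper bounded on the support of $\pi$ (required precisely for the attainment claim) makes $\pi\{h\}[h^{+}] < +\infty$ and controls $\rho[h^{+}]$ uniformly for every $\rho \ll \pi$, while for a general competitor $\rho$ with $\mathcal{K}(\rho,\pi) < +\infty$, integrability of $h^{-}$ against $\rho$ can be read off the identity itself once the three other terms are finite; the convention $\infty - \infty = -\infty$ absorbs the remaining pathological cases.
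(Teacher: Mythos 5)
The paper does not prove this lemma at all: it is stated in the appendix purely as a reminder, with attribution to \cite{Donsker1975} (and it is the same duality formula used throughout Catoni's PAC-Bayesian work), so there is no in-paper argument to compare yours against. On its own merits, your proof is the standard one and is essentially correct: the chain-rule identity $\mathcal{K}(\rho,\pi\{h\}) = \mathcal{K}(\rho,\pi) - \rho[h] + \log \pi[\exp(h)]$ together with non-negativity of the Kullback divergence gives the inequality $\sup_{\rho}(\rho[h]-\mathcal{K}(\rho,\pi)) \le \log\pi[\exp(h)]$ and identifies the equality case, and your handling of $\rho \not\ll \pi$ via the convention is right. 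The one point you should tighten is the direction $\ge$ when $h$ is \emph{not} upper-bounded: the lemma asserts equality in \eqref{lemmacatoni} under the sole hypothesis $\pi[\exp(h)]<+\infty$, but in that generality $\pi\{h\}[h^{+}]$ can be infinite (so that $\mathcal{K}(\pi\{h\},\pi)=+\infty$ and the candidate optimizer contributes $\infty-\infty=-\infty$ under the stated convention), and your equality-case argument then produces no $\rho$ witnessing the supremum. The standard fix is a truncation: apply your identity to $h_M = \min(h,M)$, for which the Gibbs measure $\pi\{h_M\}$ does attain $\log\pi[\exp(h_M)]$, note that $\pi\{h_M\}[h]-\mathcal{K}(\pi\{h_M\},\pi) \ge \log\pi[\exp(h_M)]$, and let $M\to\infty$ using monotone convergence. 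This is a one-line addition, and it is moot for the paper's application since there $h$ is a bounded function of the bounded sample.
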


\end{document}